%
%
%
\documentclass{amsproc}
\usepackage[utf8]{inputenc}
  \usepackage{amsmath}
\usepackage{enumerate}
  \usepackage{amssymb}
  \usepackage{amsthm}
  \usepackage{bm}
  \usepackage{graphicx}
  \usepackage{color}
\usepackage{accents}
\parskip=1.2\parskip
\usepackage{epsfig}
\usepackage{amsmath,amssymb}
\pagestyle{myheadings}
\hoffset - 0.5 in
\textheight=7.2in

\newtheorem{Th}{Theorem}[section]
\newtheorem{lem}[Th]{Lemma}

\theoremstyle{definition}
\newtheorem{Def}[Th]{Definition}

\theoremstyle{remark}

\numberwithin{equation}{section}



\newcommand{\ds}{\displaystyle}

\newcommand{\printdate}{\today}


\title{On Flat Polynomials with Non-Negative Coefficients  }

\author{e. H. el Abdalaoui}
\address{Normandy University, University of Rouen
  Department of Mathematics, LMRS  UMR 60 85 CNRS\\
Avenue de l'Universit\'e, BP.12
76801 Saint Etienne du Rouvray - France .}
\email{elhoucein.elabdalaoui@univ-rouen.fr}
\urladdr{http://www.univ-rouen.fr/LMRS/Persopage/Elabdalaoui/}
\author{M. G. Nadkarni}
\address{Department of Mathematics, University of Mumbai, Vidyanagari, Kalina,  Mumbai, 400098, India}
\email{mgnadkarni@gmail.com}
\urladdr{http://insaindia.org/detail.php?id=N91-1080}

\subjclass[2010]{Primary 37A05, 37A30, 37A40; Secondary 42A05, 42A55}

\dedicatory{}

\keywords{ simple Lebesgue spectrum, singular measure, Generalized Riesz products, flat polynomials.\\
 {\printdate}}
\begin{document}
\maketitle
\begin{abstract} {We formulate and prove a necessary condition for a sequence of analytic trigonometric polynomials with real non-negative coefficients to be  flat a.e.}
\end{abstract}

\section{Introduction}\label{intro}  A sequence $P_j, j =1,2,\cdots$ of analytic trigonometric polynomials of $L^2$ norm one is said to be flat if the sequence $|P_j|,$  $j=1,2,\cdots$ of their absolute values
converges to the constant function 1 in some sense. The sense of convergence varies according to the situation. We will require that $P_j, j=1,2,\cdots$ converge in absolute value to the constant function 1 almost everywhere with respect to the Lebesgue measure on the unit circle. It is not known if such a flat sequence exists if we require that coefficients of each $P_j$ be  real and non-negative and uniformly bounded away from 1 over all j. The question is of interest since an affirmative answer to this question implies that there exists a invertible non-singular transformation on the unit interval with simple Lebesgue spectrum \cite{Abd-Nad} . Further if such a flat sequence $P_n, n=1,2,\cdots$ can be chosen from the class $B$ of polynomials of the type:
$$P(z) = \frac{1}{\sqrt m}(1 + z^{R_1} + z^{R_2} + \cdots + z^{R_{m-1}}), R_1 < R_2 < \cdots < R_{m-1}, m = 2,3, \cdots,$$
then there there exists an invertible Lebesgue  measure preserving transformation on the real line with simple Lebesgue spectrum \cite{Bourgain}, {\cite{Guenais}}, thus answering a question of Banach mentioned in the Scottish book. \\

The purpose of this note is to formulate and prove a necessary condition for the existence of a sequence of  flat polynomials in the above sense with real non-negative coefficients. A careful look at this condition shows that the problem of existence of an a.e. flat sequence of polynomials from the class $B$ is related to questions in combinatorial number theory (see section 7).\\

\section{a.e. flat sequence of polynomials}

\begin{Def}\label{def1}
 Let $S^1$ denote the circle group and let $dz$ denote the normalized Lebesgue measure on it. A sequence $P_j, j=1,2,\cdots$  of analytic trigonometric polynomials with $L^2(S^1,dz)$ norms 1 and their constant terms positive, is  said to be flat a.e. or a.e flat  if $|P_j(z)|\rightarrow  1$ a.e. $(dz)$ as $j \rightarrow \infty$. \\
\end{Def}

The sequences $P_j(z) =1, j=1,2,\cdots$ or $P_j = \sqrt{1 -\frac{1}{j}} + \sqrt{\frac{1}{j}}z, j=1,2,\cdots$ are obviously flat a.e. It is easy to give flat a.e. sequence $P_j, j=1,2,\cdots$ of polynomials with non-negative coefficients, where the largest of the coefficients of $P_j$ converges to 1. (Here and in the sequel, nonnegative will mean real and non-negative.) Next we observe  the following: If
$P_j  , j=1,2,$ is an a.e flat sequence of polynomials with non-negative coefficients and if $P_j(1) \rightarrow 1$ as $j \rightarrow \infty$, then the largest of the coefficients of
$P_j$ converges to 1 as $j\rightarrow \infty$. Indeed if for each $j$, $c_{k,j}, 0 \leq k\leq n_j$ are the coefficients of $P_j$, then
$$1 = \sum_{k=0}^{n_j}c_{k,j}^2 \leq \sum_{k=0}^{n_j}c_{k,j} \rightarrow 1 ~~~{\rm{as}}
~~~ j \rightarrow \infty,$$
from which it is easy to conclude that $\max_{0 \leq k\leq n_j}\{c_{k,j}\} \rightarrow 1$ as $j\rightarrow \infty$.\\

Let $P_j, j=1,2,\cdots$ be an a.e. $(dz)$ flat sequence with non-negative coefficients. Assume that
for a.e. $z$, $P_j(z) \rightarrow \phi(z)$, as $j \rightarrow \infty$, for some function $\phi$
of absolute  value 1 on $S^1$. Then $P_j, j=1,2,\cdots$ converges to $\phi$ weakly, whence Fourier coefficients of $\phi$ are all  non-negative. If $\phi$ has two or more coefficients positive
we can conclude that the constant function $1 = \phi {\overline{\phi}}$ has two or more
Fourier coefficients positive, which  is not true. Whence $\phi = z^n$ for some $n$, which
in turn implies that the largest coefficient of $P_j$ converges to 1 as $j \rightarrow \infty$.
In particular the simple minded way of constructing a.e. ($dz$) flat sequence of polynomials,
namely taking the partial sums of an analytic function on $S^1$  of absolute value 1 a.e. ($dz$),
will not yield such a sequence with non-negative coefficients and with maximum of the
coefficients uniformly bounded away from 1.\\

\section{Covariance matrix of $\big| P\big|^2$ and the quantity $C$}

    Consider a polynomial with non-negative coefficients of $L^2(S^1,dz)$ norm 1.
    Such a polynomial with $m$ non-zero coefficients can be written as:\\
 $$P(z) = \sqrt {p_0} + \sqrt {p_1}z^{R_1} + \cdots + \sqrt{p_{m-1}}z^{R_{m-1}},  ~~~\eqno (4)$$
 where each $p_i$ is positive and their sum is 1. Such a $P$ gives a probability measure $\mid P(z)\mid^2dz$ on the circle group which we denote by $\nu$. Now $\mid P(z)\mid^2$ can be written as
$$\mid P(z)\mid^2 = 1 + \sum_{\overset{k=-N,}{k\neq 0}}^{N}a_kz^{n_k} ,$$
where each $n_k$ is of the form $R_i - R_j,$ and each $a_k$ is a sum of terms of the type ${\sqrt{p_i}}{\sqrt{p_j}}, i\neq j$, with $R_j -R_i = n_k$ , $a_k = {a_{-k}}, 1\leq k \leq N$.
We will write $$L = \sum_{\overset{k=-N,}{k\neq 0}}^{N}a_k =  | P(1)|^2 -1.$$\\

 Then
 $$L = \sum_{\overset{0 \leq i,j\leq m-1,}{ i\neq j}}\sqrt {p_i}\sqrt{p_j},$$ is a function of
  probability vectors $(p_0, p_1, p_2, \cdots p_{m-1})$, which  attains its maximum value when each $p_i = \frac{1}{m}$, and the maximum value is $\frac{m(m-1)}{m} = {m-1}$.\\
  
  We conclude therefore that $\mid L \mid \leq m-1$. 
   We also note that $m - 1 \leq N \leq \frac{1}{2}m(m-1)$. So, when $p_i$'s are all equal
   and = $\frac{1}{m}$ we have 
   $$\frac{N}{L^2} \leq  \frac{m}{2(m-1)} \leq 1 \textrm{~~for~~} m \geq 2.$$

  For each $k, -N \leq k \leq N, k\neq 0$, let $D_k$ denote the cardinality of the set of pairs
  $(i,j), i \neq j , -N\leq i,j \leq N, i,j \neq 0$, such  that $n_j - n_i = n_k$. For each $k$, $D_k \leq 2N - 2\mid k\mid +2 \leq m(m-1)$, whence
  $$\Big| \sum_{\overset{k=-N}{k\neq 0}}^Na_k D_k\Big| \leq m(m-1) \sum_{\overset{k=-N}{k\neq 0}}^N a_k < m^3.$$

 We write
 $$A(P) = A = \sum_{\overset{k=-N}{k\neq 0}}^N a_k D_k,$$
 $$ B(P) =B = \sum_{\overset{-N \leq i,j\leq N}{0\neq i,j}}a_ia_j =
 \Big(|P(1)|^2 -1\Big)^2. $$

Consider the random variables  $X(k) =z^{n_k} - a_k$ with respect to the measure $\nu = \mid P(z)\mid^2dz$. We write $m(k,l) = \int_{S^1}X(k)\overline{X(l)}d\nu$, $-N \leq k,l\leq N, k, l \neq 0$ and $M$ for the correlation matrix with entries $m(k,l), -N \leq k,l\leq N, k, l \neq 0$. We call $M$ the covariance matrix associated to $\mid P(z)\mid^2$. Since linear combination of $X(k), -N \leq k\leq N, k\neq 0$, can vanish at no more than a finite set in $S^1$, and,
$\nu$ is non discrete, the random variables $X(k), -N \leq k \leq N, k\neq 0$ are linearly independent, whence the covariance matrix $M$ is non-singular.\\

Note that

$$m_{i,j} =\int_{S^1}z^{n_i - n_j}d\nu  - a_ia_j,~~~~ m_{i,i} = 1 -  a_i^2$$

Let $r(P) = r$ denote the sum of the entries of the matrix $M$. We have
\begin{eqnarray*}
r  &=&\sum_{\overset{k= -N}{k\neq 0}}^N\sum_{\{{i,j}, n_i - n_j = n_k ,  {i,j} \neq 0\}}m_{i,j}
  + \sum_{\overset{k= -N}{k\neq 0}}^Nm_{k,k}\\
&=&\sum_{\overset{k= -N}{k\neq 0}}^N\sum_{\{{i,j}, n_i - n_j = n_k ,  {i,j} \neq 0\}}(a_k - a_ia_j) + 2N - \sum_{\overset{i=-N}{i\neq 0}}^N a_i^2\\
&=&\sum_{\overset{k= -N}{k\neq 0}}^N a_kD_k + 2N - \sum_{\overset{-N\leq i,j\leq N}{i,j \neq 0}}a_ia_j\\
&=& A + 2N -  L^2.
\end{eqnarray*}

Since $A$ is of order at most $m^3$,  $N \leq \frac{1}{2}m(m-1)$, and $ L^2$ is of order $m^2$, we see that $r$ is of order at most $m^3$. We also note that the quantity
$C(P) = C = \sum_{\{(i,j), -N\leq i,j\leq N, i,j \neq 0\}}\mid m_{i,j}\mid$ is also of order at most $m^3$. Indeed
$$C  \leq \sum_{\overset{k=-N}{k\neq 0}}^N\Big(D_k a_k + \sum_{\{(i,j):i-j =k, i,j\neq 0\}}  a_ia_j\Big)+ 2N,$$
which shows that $C$ is of order at most $m^3$. \\

\section{Dissociated polynomials and generalized Riesz products}

 We say that a set of trigonometric polynomials is dissociated if in the formal expansion of product of any finitely many of them, the powers of $z$ in the non-zero terms are all distinct \cite{Abd-Nad}.\\

If $P(z) = \ds \sum_{j=-m}^m a_jz^j, Q(z) = \ds \sum_{j=-n}^{n}b_jz^j$, $m \leq n$, are two trigonometric  polynomials then
for some $N$, $P(z)$ and $Q(z^N)$ are dissociated. Indeed
$$P(z)\cdot Q(z^N) = \sum_{i=-m}^m\sum_{j=-n}^n a_ib_jz^{i+Nj}.$$
If we choose $N > 2n$, then we will have two exponents, say $i +Nj$ and $u+Nv$, equal if and only if $i-u = N(v-j)$ and since  $N$ is bigger than $2n$, this can happen if and only if $i=u$ and $j=v$. More generally, given any sequence $P_1, P_2, \cdots$ of polynomials one can find integers $1 = N_1 < N_2 < N_3 < \cdots,$  such that $P_1(z^{N_1}), P_2(z^{N_2}), P(z^{N_3}), \cdots$ are dissociated. Note that since the map $z \longmapsto z^{N_i}$ is measure preserving, for any $p > 0$  the $L^p(S^1, dz)$ norm of $P_i(z)$ and $P_i(z^{N_i})$ remain the same.\\

Now let $P_1, P_2,\cdots$ be a sequence of polynomials, each $P_i$ being  of $L^2(S^1, dz)$ norm 1. Then the constant term of each $\mid P_i(z)\mid^2$ is 1. If we choose  $1 = N_1 < N_2 < N_3 \cdots$  so that $\mid P_1(z^{N_1})\mid^2, \mid P_2(z^{N_2})\mid^2, \mid P_3(z^{N_3})\mid^2, \cdots$ are dissociated, then the constant term of each finite product
$$\prod_{j=1}^n\mid P_j(z^{N_j})\mid^2$$ is one so that each finite product  integrates to 1 with respect to $dz$. Also, since $\mid P_j(z^{N_j}) \mid^2, j =1,2, \cdots$ are dissociated,
for any given $k$, the $k$-th Fourier coefficient of $\prod_{j=1}^n\mid P_j(z^{N_j})\mid^2$ is either zero for all $n$, or, if it is non-zero for some $n = n_0$ (say), then its remains the same  for all $n \geq n_0$. Thus the measures $(\prod_{j=1}^n| P_j(z^{N_j})|^2)dz, n=1,2,\cdots$ admit a weak limit
on $S^1$. It is called the generalized Riesz product of the polynomials
$\mid P_j(z^{N_j})\mid^2, j=1,2,\cdots$.\cite{Host-Mela-Parreau}, \cite{Abd-Nad}.   Let $\mu$ denote this measure. It is known \cite{Abd-Nad} that
$ \prod_{j=1}^k |P_j(z^{N_j})|, k=1,2,\cdots$, converge in $L^1(S^1,dz)$ to
$\sqrt{\frac{d\mu}{dz}}$ as $k\rightarrow \infty$. It follows from this that if $\prod_{j=1}^k\mid P_j(z^{N_j})\mid , k=1,2,\cdots$ converge a.e. $(dz)$ to a finite positive value then $\mu$ has a part which is equivalent to Lebesgue measure.\\

\section{A necessary condition for a.e. flatness}

We will now consider a sequence $P_j(z),$ $j=1,2,\cdots$ of polynomials, each $P_j$ of $L^2(S^1, dz)$ norm 1, and non-negative coefficients. The quantities $A(P_j), C(P_j)$ etc will now written as $A_j, C_j$ etc. It will follow from our considerations below that {\it if a sequence of polynomials $P_j, j=1,2,\cdots$ from the class $B$ is flat then $\frac{C(P_j)}{m_j^2} \rightarrow \infty$ as $j \rightarrow \infty$}.\\

The main theorem is as follows:\\

 \begin{Th}\label{mainth}
  If $L_j, j=1,2,\cdots$ are uniformly bounded away from 0 and  $~~\ds \lim_{j\rightarrow \infty}|P_j(z)| = 1$ a.e. $(dz)$ then
 $\frac{C_j}{m^2_j}\rightarrow \infty$ as $j\rightarrow \infty$.
\end{Th}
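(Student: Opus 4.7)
My plan is to translate the flatness hypothesis into sharp asymptotics for $V_{2,j}$, $r_j$, and $A_j+2N_j$, and then argue that the covariance matrix $M_j$ cannot simultaneously realize these asymptotics and have $C_j/m_j^2$ bounded. First I observe that the hypotheses force $m_j\to\infty$: any subsequence along which $m_j$ stayed bounded would, by a compactness argument, yield a subsequential limit $P_\infty$ with $|P_\infty|=1$ pointwise, and since $P_\infty$ has non-negative coefficients it must be a monomial $z^n$, contradicting $L_j\ge\delta>0$. Next, Scheffe's lemma applied to the probability densities $|P_j|^2$ (which converge a.e.\ to $1$) gives $\epsilon_j:=\bigl\|\,|P_j|^2-1\,\bigr\|_{L^1}\to 0$, so every Fourier coefficient $a_k^{(j)}$ of $|P_j|^2$ satisfies $|a_k^{(j)}|\le\epsilon_j\to 0$.

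Second, the non-negativity of the coefficients of $P_j$ yields the pointwise bound $|P_j(z)|\le|P_j(1)|=\sqrt{1+L_j}$, in particular $|P_j|^2-1\le L_j$ everywhere. Splitting $\int(|P_j|^2-1)^2\,dz$ according to whether $|P_j|^2\lessgtr 1$ and using this bound on each piece, one obtains
\[
V_{2,j}\;=\;\sum_k(a_k^{(j)})^2\;\le\; L_j\epsilon_j,
\]
so $V_{2,j}=o(L_j)$. Plugging this into the two-sided estimate
\[
(L_j-V_{2,j})^2\;\le\; r_j\;=\;\int T_j^2\,|P_j|^2\,dz\;\le\;(1+L_j)(L_j-V_{2,j}),
\]
where $T_j=|P_j|^2-(1+L_j)$ (the upper bound comes from $|T_j|\le 1+L_j$, and the lower bound is Cauchy--Schwarz applied to $|T_j|$ against $d\nu_j=|P_j|^2\,dz$), one gets $r_j=L_j^2(1+o(1))$. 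The identity $r_j=A_j+2N_j-L_j^2$ from Section~3 then yields $A_j+2N_j\sim 2L_j^2$.

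Finally, to conclude $C_j/m_j^2\to\infty$, I would argue by contradiction. Assume $C_j\le Km_j^2$ along some subsequence; then the diagonal contribution $\sum_i m_{ii}=2N_j-V_{2,j}\sim 2N_j$ forces $N_j\lesssim Km_j^2$, while the Cauchy--Schwarz inequality $2N_j\ge L_j^2/V_{2,j}\ge L_j/\epsilon_j$ constrains $\epsilon_j$ from below. Playing this back against the trace identity $\mathrm{Tr}(M_j^2)=\sum m_{ij}^2$, the PSD lower bound $\mathrm{Tr}(M_j^2)\ge(\mathrm{Tr}\,M_j)^2/(2N_j)\sim 2N_j$, and the elementary inequality $\mathrm{Tr}(M_j^2)\le C_j\cdot\max_{i,j}|m_{ij}|$ (with the off-diagonal maximum $\le 2\epsilon_j$), one extracts a chain of quantitative relations between $N_j$, $L_j$, $m_j$, and $\epsilon_j$ that can be made inconsistent with flatness. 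The main obstacle is exactly this last step: since the naive triangle-inequality estimate yields only $C_j=O(L_j^2)=O(m_j^2)$, any proof of strict growth $C_j/m_j^2\to\infty$ must exploit subtle off-diagonal cancellation in the covariance matrix $M_j$ rather than pointwise entry bounds, and matching the combinatorial structure of the difference set $\{n_k\}$ with the a.e.\ flatness of $|P_j|^2$ is where the main analytical work is concentrated.
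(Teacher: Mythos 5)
Your proposal contains two genuine gaps, and the first is a concrete computational error that undermines the entire chain of asymptotics. The identity $r_j=\int T_j^2\,|P_j|^2\,dz$ with $T_j=|P_j|^2-(1+L_j)$ is false: the sum of the entries of $M_j$ equals $\int_{S^1}\bigl|\sum_{k\neq 0}(z^{n_{k,j}}-a_{k,j})\bigr|^2\,d\nu_j$, and $\sum_{k\neq 0}z^{n_{k,j}}$ is the \emph{unweighted} exponential sum over the difference set, not $|P_j|^2-1=\sum_{k}a_{k,j}z^{n_{k,j}}$. Already for $P=(1+z)/\sqrt{2}$ the true value is $r=1$ while your formula gives $1/2$. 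Worse, your upper bound $r_j\le(1+L_j)(L_j-V_{2,j})\le L_j+L_j^2$ uses no flatness at all, so it would have to hold for every polynomial with non-negative coefficients; but Section~6 of the paper exhibits class-$B$ polynomials with $A=\Theta(m^3)$, hence $r=A+2N-L^2=\Theta(m^3)\gg L^2$. So the asymptotics $r_j\sim L_j^2$ and $A_j+2N_j\sim 2L_j^2$ are not just unproved but false. Note also that your derived consequence $C_j=O(L_j^2)=O(m_j^2)$ would, if correct, make the conclusion $C_j/m_j^2\to\infty$ unattainable and thus ``prove'' the nonexistence of flat sequences outright; the correct a priori bound is only $C_j=O(m_j^3)$. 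The second gap is the one you acknowledge yourself: the contradiction with $C_j\le Km_j^2$ is never derived, and the inequalities you list (trace and PSD bounds, Cauchy--Schwarz on the $a_k$) are not shown to be mutually inconsistent with flatness.

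That missing final step is precisely where the paper does all of its work, and by a completely different mechanism from any pointwise or spectral manipulation of $M_j$. The paper first invokes Theorem~\ref{th7} (from \cite{Abd-Nad}): a.e.\ flatness yields a subsequence $P_{j_k}(z^{l_k})$ whose squared moduli are dissociated and whose generalized Riesz product $\mu$ has finite nonzero density a.e.\ $(dz)$. Lemma~\ref{lem1} then shows that if $\sum_k L_{j_k}^2/C_{j_k}=\infty$, a Banach--Steinhaus choice of weights $b_j$ combined with the bound $(M_jB_j,B_j)=r_j|b_j|^2/C_j\le |b_j|^2$ produces orthogonal series converging a.e.\ with respect to both $\mu$ and its translate $\mu_v$, whose difference diverges; this forces $\mu\perp\mu_v$ for every $v$ with $|P_j(v)|\to 1$, contradicting $d\mu/dz>0$ a.e. Hence $\sum_k L_{j_k}^2/C_{j_k}<\infty$ along every such subsequence, and $L_j^2/C_j\to 0$. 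If you want to salvage a direct quantitative route you would need, at minimum, the correct expression for $r_j$ and some substitute for this singularity-of-translates dichotomy; the covariance matrix enters the paper's argument only through the single inequality $r_j\le C_j$ (and its translate analogue $|r_{v,j}|\le C_j$), not through any finer spectral information.
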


To prove this we need the following lemma, which should not be viewed as new singularity result for  Riesz products, rather it is an ancillary result needed to prove the main theorem.\\

\begin{lem}\label{lem1}

  {If $P_j(z), j =1,2,\cdots$ is a sequence of analytic trigonometric polynomials of $L^2(S^1, dz)$ norm 1  such that\\
  (i) $L_j, j=1,2,\cdots$ are uniformly bounded away from 0,\\
 (ii) the polynomials $\mid P_j\mid^2, j =1,2,\cdots$ are dissociated\\
 (iii) $\sum_{j=1}^\infty\frac{L^2_j}{ {C_j}} = \infty$,\\
  then $\mu = \Pi_{j=1}^\infty \mid P_j(z)\mid^2$ is
singular to its translate $\mu_u$ for every $u\in S^1$ for which the  sequence $\mid P_j(u)\mid \rightarrow 1$, as $j\rightarrow \infty$. }\\
\end{lem}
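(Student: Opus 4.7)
The strategy is to establish $\mu \perp \mu_u$ by proving that the finite-product density ratios
$$ R_n(z) \;=\; \prod_{j=1}^n \frac{|P_j(u^{-1}z)|^2}{|P_j(z)|^2} $$
converge to $0$ in $\mu$-probability. Combined with the standard martingale framework for dissociated generalized Riesz products (as developed in \cite{Abd-Nad}), this forces $\mu \perp \mu_u$. The driver of the degeneracy will be the divergence, under hypothesis (iii), of the variance of a natural weighted log-likelihood test.

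Following the notation of Section~3, write $|P_j|^2 = 1 + \sum_{k\neq 0} a_k^{(j)} z^{n_k^{(j)}}$ and introduce the difference polynomial
$$ h_j(z) \;=\; |P_j(u^{-1}z)|^2 - |P_j(z)|^2 \;=\; \sum_{k\neq 0} \xi_k^{(j)}\, z^{n_k^{(j)}}, \qquad \xi_k^{(j)} \;=\; a_k^{(j)}\bigl(u^{-n_k^{(j)}}-1\bigr). $$
The point evaluation $h_j(u) = |P_j(1)|^2 - |P_j(u)|^2 = L_j + (1-|P_j(u)|^2)$ is, by the hypotheses on $L_j$ and on $|P_j(u)|$, bounded below in modulus by $\delta/2$ eventually, where $\delta = \inf_j L_j > 0$. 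Under $\nu_j = |P_j|^2\,dz$, the centered version of $h_j$ is $\sum_k \xi_k^{(j)} X_j(k)$, so
$$ \mathrm{Var}_{\nu_j}(h_j) \;=\; \sum_{k,l} \xi_k^{(j)} \overline{\xi_l^{(j)}} \, m_{k,l}^{(j)} \;=:\; \langle \xi^{(j)}, M_j\, \xi^{(j)}\rangle. $$
Writing $h_j(u) = \langle \xi^{(j)}, v^{(j)}\rangle$ with $v_k^{(j)} = u^{n_k^{(j)}}$, a Cauchy--Schwarz pairing in the inner product induced by $M_j$, combined with a bound $\langle v^{(j)}, M_j^{-1} v^{(j)}\rangle \lesssim C_j$ extracted from $|v_k^{(j)}|=1$ and the $\ell^1$-mass of the entries of $M_j$, yields the key lower bound
$$ \mathrm{Var}_{\nu_j}(h_j) \;\ge\; \frac{|h_j(u)|^2}{\langle v^{(j)}, M_j^{-1} v^{(j)}\rangle} \;\gtrsim\; \frac{L_j^2}{C_j}. $$

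By the dissociation hypothesis~(ii), the polynomials $h_j$ are orthogonal in $L^2(\mu)$ modulo a negligible mean term, so their variances add under $\mu$. Summing the individual lower bounds, hypothesis~(iii) forces the variance under $\mu$ of the partial sum $\sum_{j\le n}(h_j - E_{\nu_j}h_j)$ to infinity. A Paley--Zygmund/Chebyshev argument applied to a weighted version of this sum with weights $\beta_j \asymp L_j/C_j$ produces a test statistic whose $\mu$-mean dominates its $\mu$-standard deviation by a factor $(\sum L_j^2/C_j)^{1/2} \to \infty$, so this statistic tends to $-\infty$ in $\mu$-probability. A Taylor expansion of $\log(f_j(u^{-1}z)/f_j(z))$ on the $\mu$-full set where each factor stays bounded away from $0$ (controlled via dissociation) then shows $\log R_n \to -\infty$ in $\mu$-probability, whence $R_n \to 0$ in $\mu$-probability, which is the desired singularity.

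The hard part will be the variance lower bound $\mathrm{Var}_{\nu_j}(h_j) \gtrsim L_j^2/C_j$, which requires controlling $\langle v^{(j)}, M_j^{-1} v^{(j)}\rangle$ by the $\ell^1$-mass of $M_j$ \emph{itself}, not of its inverse. Either a careful spectral analysis exploiting the sparse, real-symmetric structure of $M_j$ is needed, or---perhaps more cleanly---one can bypass inversion entirely via a direct Cauchy--Schwarz on the quadratic form, balancing $C_j = \sum_{k,l} |m_{k,l}^{(j)}|$ against the pointwise datum $|h_j(u)| \gtrsim L_j$. A secondary subtlety is transferring the single-polynomial variance bound from $\nu_j$ to $\mu$ while controlling the higher-order terms in the $\log$ expansion; dissociation is the essential tool for both tasks.
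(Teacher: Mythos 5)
Your overall plan has a genuine gap at exactly the point on which everything else depends. The Cauchy--Schwarz step $\mathrm{Var}_{\nu_j}(h_j)\ \ge\ |h_j(u)|^2/\langle v^{(j)},M_j^{-1}v^{(j)}\rangle$ is a correct inequality, but the bound you then need, $\langle v^{(j)},M_j^{-1}v^{(j)}\rangle \le \mathrm{const}\cdot C_j$, is neither proved nor plausible as a general fact: $C_j$ is the $\ell^1$-mass of the entries of $M_j$, so it controls $M_j$ from \emph{above} (its largest eigenvalue and all quadratic forms $\langle M_j x,x\rangle$ with bounded entries), and gives no control whatsoever on $M_j^{-1}$. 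The lemma only guarantees that $M_j$ is nonsingular; it can be arbitrarily close to singular, in which case $\langle v,M_j^{-1}v\rangle$ blows up with no relation to $C_j$, and nothing forces the all-ones-phase vector $v^{(j)}$ to avoid the bad eigendirections. You flag this yourself as ``the hard part,'' which means the proof as written does not exist. Moreover, even granting that bound, the probabilistic chain is not correctly assembled: a \emph{large} variance of $h_j$ is not by itself evidence for singularity; what singularity arguments need is a statistic whose fluctuations are \emph{controlled} under both measures while its centerings under $\mu$ and $\mu_u$ separate divergently. The pointwise value $h_j(u)=L_j+(1-|P_j(u)|^2)$ is not a mean of $h_j$ under either $\mu$ or $\mu_u$ (those means involve sums of the type $\sum_k a_k^2(u^{-n_k}-1)$ and are not shown to be of order $L_j$), the statement that the $\mu$-mean of a \emph{centered} weighted sum dominates its standard deviation does not parse, and the final Taylor expansion of $\log$ requires the factors $|P_j|^2$ to stay bounded away from $0$ on a large set, which dissociation does not provide.

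For comparison, the paper's proof (a Peyri\`ere-type argument) needs only upper bounds on quadratic forms of $M_j$, which is exactly what $C_j$ delivers, and never inverts $M_j$. One takes the equal-weight statistics $\frac{1}{\sqrt{C_j}}\sum_{k\neq 0}(z^{n_{k,j}}-a_{k,j})$ and $\frac{1}{\sqrt{C_j}}\sum_{k\neq 0}(z^{n_{k,j}}-v^{-n_{k,j}}a_{k,j})$; their quadratic forms are $r_j/C_j\le 1$ and $r_{v,j}/C_j\le 1$ (with $M_{v,j}$ unitarily equivalent to $M_j$). Choosing, by Banach--Steinhaus, weights $b_j\in\ell^2$ with $\sum_j b_jL_j/\sqrt{C_j}=\infty$ (possible by hypothesis (iii)), the two weighted series converge a.e.\ with respect to $\mu$ and to $\mu_v$ respectively (along a subsequence), while the difference of their centerings is $\sum_j\frac{b_j}{\sqrt{C_j}}\sum_k a_{k,j}(1-v^{-n_{k,j}})$, which diverges because $\sum_k a_{k,j}(1-v^{n_{k,j}})=L_j-(|P_j(v)|^2-1)$ and $|P_j(v)|\to 1$. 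If $\mu$ and $\mu_v$ were not mutually singular there would be a common point where both series converge, a contradiction. If you want to salvage your likelihood-ratio framing, you would have to replace the inverse-covariance variance bound by an estimate of this kind, i.e.\ one that only uses $C_j$ as an upper bound on the covariance form and extracts the $L_j$-separation from the centerings rather than from $\mathrm{Var}(h_j)$.
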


\begin{proof}
 By Banach-Steinhaus theorem there exist $b_j, j=1,2,\cdots $, with their sum of absolute squares finite such that for each $j$, $\frac{L_j}{C_j}b_j \geq 0$ and $\sum_{j=1}^\infty \frac{L_j}{\sqrt {C_j}}b_j = \infty$. Fix a $v\in S^1$ such that  $\mid P_j(v)\mid \rightarrow 1$ as $j \rightarrow \infty$.
Note that
$$\sum_{j=1}^\infty \Big(\sum_{\overset{k=-N_j}{k\neq 0}}^{N_j}a_j\big(1  - v^{n_{k,j}}\big)\Big) =
\sum_{j=1}^\infty\Big(L_j - \big(\mid P_j(v)\mid^2 -1\big)\Big).$$
Since $\mid P_j(v)\mid^2\rightarrow 1$ as $j \rightarrow \infty$, the series
$\sum_{j=1}^\infty\Big(\frac{L_j-(\mid P_j(v)\mid^2 -1)}{\sqrt{C_j}}\Big)b_j$  diverges.
Let $B_j$ be the $1\times 2N_j$ matrix with all entries equal to $\frac{b_j}{\sqrt {C_j}}, j=1,2,\cdots$. Then

$$(M_jB_j,B_j) = \frac{r_j\mid b_j\mid^2}{C_j} \leq \mid b_j\mid^2,$$
whence $\sum_{j=1}^\infty (M_jB_j, B_j)$  is a finite sum, which in turn implies that the series in $j$\\
$$\sum_{j=1}^\infty\sum_{\overset{k=-N_j}{k\neq 0}}^{N_j} \frac{b_j}{\sqrt {C_j}}(z^{n_{k,j}} - a_{k,j})$$
 converges a.e. ($\mu$) over a subsequence.\\

Consider now the translated measure $\mu_v(\cdot) = \mu(v(\cdot))$. We have
$$\int_{S^1}z^{n_{k,j}}d\mu_v = v^{-n_{k,j}}a_{k,j}.$$
The covariance  matrix $M_{v,j}$ of the random variables
$z^{n_{k,j}} - v^{-n_{k,j}}a_{k,j}, -N_j \leq k \leq N_j, k\neq 0$ with respect to the
translated measure $\mu_v$ has entries $ v^{-(n_{k,j}- n_{l,j})}m_{k,l}$, which can be seen to
be unitarily equivalent to $M_j$. Indeed, $$M_{v,j} = U_j^{-1}M_jU_j,$$ where
  $U_j$ is a $2N_j\times 2N_j$ diagonal matrix with entries $$v^{n_{-N_j,j}}, v^{n_{-N_j +1,j}}, \cdots, v^{n_{-1,j}}, v^{n_{1,j}}\cdots, v^{n_{N_j-1,j}}, v^{n_{N_j, j}},$$
  along the diagonal in that order.\\

  We note that
  $$\sum_{j=1}^\infty(M_{v,j}B_j, B_j)$$
  $$= \sum_{j=1}^\infty \frac{r_{{v,j}}}{C_j}\mid b_j\mid^2 < \infty,$$
where $r_{v,j}$ is the sum of the entries of the of the matrix $M_{v,j}, j=1,2,\cdots$. It is clear that for all $j$,
$|r_{v,j}| \leq C_j$. \\

As before we conclude that the series
$$ \sum_{j=1}^\infty \Big(\sum_{\overset{k=-N_j,}{k\neq 0}}^{N_j}\frac{b_j}{\sqrt{C_j}}\big(z^{n_{k,j}} - v^{-n_{k,j}}a_{k,j}\big)\Big)$$
converges a.e $\mu_v$ over a subsequence.

If $\mu$ and $\mu_v$ are not mutually singular, then there exist an $z_0\in S^1$ and an
increasing sequence $K_p, p=1,2,\cdots$ of natural numbers such that the sequences

$$\sum_{j=1}^{K_p}\sum_{\overset{k=-N_j}{k\neq 0}}^{N_j}\frac{b_j}{\sqrt {C_j}}(z_0^{n_{k,j}} - a_{k,j})$$
$$\sum_{j=1}^{K_p}\sum_{\overset{k=-N_j}{k\neq 0}}^{N_j}\frac{b_j}{\sqrt{C_j}}(z_0^{n_{k,j}} - v^{-n_{k,j}}a_{k,j})$$

converge to a finite number as $p \rightarrow \infty$. The difference of the two partial sums is
$$\sum_{j=1}^{K_p}\sum_{\overset{k=-N_j}{k\neq 0}}^{N_j}\frac{b_j}{\sqrt{C_j}}a_{k,j}(1 - v^{-n_{k,j}}),$$
which diverges as $p\rightarrow \infty$. The contradiction shows that $\mu$ and $\mu_v$ are singular.\\
\end{proof}

 The following theorem is proved in \cite{Abd-Nad}.

\begin{Th}\label{th7}   Let $P_j, j =1,2,\cdots$ be a sequence of non-constant polynomials
of $L^2(S^1,dz)$ norm 1 such that $\lim_{j\rightarrow \infty}\mid P_j(z)\mid =1 $ a.e. $(dz)$ then there exists a subsequence $P_{j_k}, k=1,2,\cdots$ and natural numbers $l_1 < l_2 < \cdots$ such that the polynomials $P_{j_k}(z^{l_k}), k=1,2,\cdots $ are dissociated and
the infinite product $\prod_{k=1}^\infty |P_{j_k}(z^{l_k})|^2$ has finite nonzero value a.e $(dz)$.

\end{Th}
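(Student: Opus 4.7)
The plan is to produce the subsequence by a two-stage thinning argument that simultaneously secures (a) rapid $L^1$-decay of $|P_j|^2-1$, and (b) dissociation via the substitution $z\mapsto z^{l_k}$ recalled in the excerpt; the infinite product is then analyzed termwise as $\prod_k(1+f_k)$ with $f_k=|P_{j_k}(z^{l_k})|^2-1$.

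First I would upgrade the hypothesis $|P_j(z)|\to 1$ a.e.\ to convergence in $L^1(S^1,dz)$. Each $|P_j|^2$ is nonnegative with $\int|P_j|^2\,dz=1=\int 1\,dz$, and $|P_j|^2\to 1$ a.e., so Scheffé's theorem gives $\|\,|P_j|^2-1\,\|_{L^1}\to 0$. (Equivalently, apply dominated convergence to the negative part $(1-|P_j|^2)_+\le 1$ and use $\int(|P_j|^2-1)\,dz=0$ to control the positive part.) Extract an initial subsequence $P_{j_k}$ such that
\[
\int_{S^1}\bigl|\,|P_{j_k}(z)|^2-1\,\bigr|\,dz\le 2^{-k}.
\]

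Next, using the observation in the excerpt that for any sequence of polynomials one can inductively choose $1=l_1<l_2<\cdots$ so that $P_{j_1}(z^{l_1}),P_{j_2}(z^{l_2}),\ldots$ are dissociated, I would fix such $l_k$. Since $z\mapsto z^{l_k}$ preserves $dz$, the $L^1$ estimate is preserved after substitution:
\[
\int_{S^1}\bigl|\,|P_{j_k}(z^{l_k})|^2-1\,\bigr|\,dz\le 2^{-k}.
\]
Summing in $k$ and applying Tonelli yields $\sum_k\bigl|\,|P_{j_k}(z^{l_k})|^2-1\,\bigr|<\infty$ a.e.\ $(dz)$.

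Finally, set $f_k(z)=|P_{j_k}(z^{l_k})|^2-1$, so $f_k\ge -1$ with equality only on the finite zero set of $P_{j_k}(z^{l_k})$. Off the countable (hence $dz$-null) union of these zero sets and off the null set where $\sum_k|f_k|=\infty$, every $f_k(z)>-1$, $f_k(z)\to 0$, and $\sum_k|f_k(z)|<\infty$. By the standard criterion for convergence of infinite products, $\prod_k\bigl(1+f_k(z)\bigr)=\prod_k|P_{j_k}(z^{l_k})|^2$ converges to a finite strictly positive value at such $z$, which is exactly the claim. The main subtlety is not any one of these steps individually but the fact that the dissociation requirement (choosing $l_k$ large) and the $L^1$-decay requirement (choosing $j_k$) do not interfere: the integral bound survives the measure-preserving substitution, so $l_k$ can be chosen freely after $j_k$ is fixed.
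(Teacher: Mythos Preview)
The paper does not supply its own proof of this theorem: it is merely quoted with the remark ``The following theorem is proved in \cite{Abd-Nad}.'' So there is nothing in the present text to compare your argument against line by line.

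That said, your argument is correct and self-contained. The key steps---Scheff\'e's lemma to pass from a.e.\ convergence $|P_j|^2\to 1$ to $L^1$ convergence, thinning to get $\|\,|P_{j_k}|^2-1\|_1\le 2^{-k}$, invariance of this bound under the measure-preserving substitution $z\mapsto z^{l_k}$, the inductive choice of $l_k$ for dissociation (exactly as described in Section~4 of the paper), and the elementary infinite-product criterion $\sum_k|f_k(z)|<\infty\Rightarrow\prod_k(1+f_k(z))\in(0,\infty)$---are all sound, and you correctly observe that the two selection requirements do not interfere. This is in fact the natural argument and is essentially how the result is obtained in \cite{Abd-Nad}; the paper's later use of the theorem (in the proof of Theorem~\ref{mainth}) only needs that $\frac{d\mu}{dz}>0$ a.e., which your conclusion delivers.
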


 We now prove Theorem \ref{mainth}.

\begin{proof}[\textbf{Proof of Theorem \ref{mainth}}] Under the hypothesis of the theorem, by theorem 5.3 we get a subsequence
$P_{j_k} = Q_k, k=1,2, \cdots$ and natural numbers $l_1 < l_2 < \cdots$ such that the polynomials
$\mid Q_k(z^{l_k})\mid^2, k=1,2,\cdots $ are dissociated and the infinite product
$\prod_{k=1}^\infty\mid Q_k(z^{l_k})\mid^2$ has finite non-zero limit a.e. $(dz)$. Also, since the absolute squared  $Q_k(z^{l_k})$'s are dissociated,  the measures $\mu_n \stackrel{\textrm{def}}{=} \prod_{k=1}^n\mid Q_k(z^{l_k})\mid^2dz$ converge weakly to a measure $\mu$ on $S^1$
for which $\frac{d\mu}{dz} > 0$ a.e $(dz)$, indeed
$$\frac{d\mu}{dz} = \prod_{k=1}^\infty\mid Q(z^{l_k})\mid^2 ~~{\rm{a.e.}}~~~~(dz).$$
 Since the map $z \longmapsto z^{l_k}$ preserves the Lebesgue measure on $S^1$,  the $m_{j_k}(u,v)$'s for  $\mid P_{j_k}(z^{l_k})\mid^2dz$ remains the same as for  $\mid P_{j_k}(z)\mid^2dz$. If $\sum_{k=1}^\infty \frac{L^2_{j_k}}{C_{j_k}} = \infty$, then by Lemma 5.2 $\mu$ will be singular to $\mu_u$ for a.e. $u$. This is false since $\frac{d\mu}{dz} > 0$ a.e. ($dz$). So $\sum_{k=1}^\infty {\frac{L^2_{j_k}}{C_{j_k}}} < \infty$.  If $\frac{L^2_{j}}{C_{j}}, j=1,2,\cdots$ does not tend to $0 $ as $j\rightarrow \infty$ then over a subsequence these ratios remain bounded away from $0$. But by the  above considerations, over a further subsequence these ratios have a finite sum, which is a contradiction. So $\frac{L^2_j}{C_j} \rightarrow 0$ as $j\rightarrow \infty$.
 \end{proof}

 Note that if $P_j, j =1,2,\cdots$ is a an a.e. flat sequence of polynomials from the class
 $B$, then $L_j = m_j-1,  j=1,2,\cdots$ is bounded away from zero, we see that $\frac{C(P_j)}{L(P_j)^2} \rightarrow \infty$ as $j \rightarrow \infty$.\\

\section{Connection with combinatorial number theory }

In this section we discuss the ratios  $\frac{C}{m^2}$ for the class $B$. In particular we give a sequence $P_j, j=1,2,\cdots$ from this class for which $\frac{C(P_j)}{m_j^2}, j =1,2, \cdots$ diverges but $P_j, j =1,2,\cdots$ is  not flat in a.e. ($dz$) sense.\\

 For a given polynomial $P(z) = \frac{1}{\sqrt m}( 1 + z^{R_1} + z^{R_2} + \cdots + z^{R_{m-1}})$ of class $B$,
with
$$\mid P(z) \mid^2 = 1 +  \sum_{j=1}^Na_j(z^{n_j} + z^{-n_j}),$$
we know that $\frac{C(P)}{m^2}$ has the same order as $ \frac{2\sum_{j=1}^Na_jD_j}{m^2}$.
However just ensuring that each $D_j$ receives maximum possible value, namely $N-j$, is not enough to ensure that $2\sum_{j=1}^Na_jD_j$ is large in comparison with ${m^2}$.  For consider
the case when for each $j$, $R_j = j$, so that
$$P(z) = \frac{1}{\sqrt m}(1 +z +z^2 + \cdots + z^{m-1})$$
$$\mid P(z)\mid^2 = 1 + \frac{1}{m}\sum_{j=1}^{m-1}(m -j)(z^j+ z^{-j}).$$
 Now
each $D_j = m-j$, so that
$$2\sum_{j=1}^{m-1}a_j D_j = 2\frac{1}{m}\sum_{j=1}^{m-1}(m-j)^2=
2\frac1{m}\sum_{j=1}^{m-1}j^2=\frac{(m+1)(2m+1)}{3} $$
which is of order $m^2$.\\

One can ensure $C$ large in comparison with $m^2$ if each $D_j$ has its maximum possible value, namely, $N-j$, and $N$ is of  higher order than $m$. Using some combinatorial number theory one can arrange this.\\

 Let $R$ be a natural number $> 2$ and let $m\geq 2$ be a natural number $\leq R$. Write $R_0 =0.$
 Let $R_0 < R_1 < R_2 < \cdots < R_{m-1} = R$ be a set of $m$ integers between 0 and $R$. Denote it by $S$. Note that $0$ and $R$ are in $S$. Let
      $$P_S(z) = \frac{1}{\sqrt{\mid S\mid}} \sum_{j \in S}z^j,$$

 $$\mid P_S(z)\mid^2 = 1 + \frac{1}{\mid S\mid}\sum_{j =1}^Nd_j(z^{n_j} + z^{-n_j})$$
where for each $j$, $d_j =$ number of pairs $(a, b), a,b \in S, b-a = j$.
 Let
 $$(S - S)^+ = \{b-a: a,b \in S, a < b\} = \{n_1 < n_2 < \cdots < n_N = R\},$$
which is the set of positive differences of elements in $S$.
   If $d_j =1$ for all $j$, then $S$ is called Sidon subset of $[0,R]$.
  As pointed out to us by R. Balasubramanian, if $S$ is a Sidon set $\subset [0, R]$, then
  $(S-S)^+ \neq [1, R]$
  (unless $R \leq 6$.). This is a consequence of a well known result of Erd\"{o}s and
  Turan \cite{erd}  which says
 that if $S \subset [1,R]$ is a Sidon set then $\mid S\mid $  is at most
 $R^{\frac{1}{2}} +  R^{\frac{1}{4}} + 1$, see \cite{lind}. So, if $S$ is a Sidon set then the
 cardinality $(S - S)^+$
is $\frac{1}{2}\mid S\mid  (\mid S\mid - 1) <  R$.\\

Let $M(S) = \max\{d_j:1 \leq j \leq N\}$. The quantities $M(S)$ and $\mid (S - S)^+\mid = N$ are
in some sense `balanced' in that if one is large the other is small, and $M(S)\mid (S-S)^+\mid$
seems to be of order $\mid S\mid^2$. Obviously, This is true when $S$ is a Sidon set and the
other extreme case when $S = [0, m-1]$ \\

We do not know if one can choose, for each $R$, a suitable Sidon set
$ S_R \subset [0, R]$, with $0, R \in S_R$, such that ratios
$\frac{C(P_{ S_R})}{\mid S_R\mid^2}, R =1, 2, \cdots$ are unbounded,
where $P_{ S_R}$ is the polynomial in class $B$ with frequencies in
$S_R$, and additionally, if such a sequence of polynomials  can be  flat in a.e. ($dz$) sense.\\     \\

Let
$$ \lambda (R) = \min \Big\{\mid S\mid : S\subset [0,R], (S - S)^+ = [1, R]\Big\}.$$

For simplicity we discuss  $\lambda (R^2)$  rather than $\lambda (R)$.\\
We have
$$\sqrt{2} R < \lambda (R^2) \leq  2R.$$

To see the left hand side of this inequality note that
$$\frac{1}{2}\sqrt{2} R(\sqrt{2} R -1) = R^2 -\frac{1}{\sqrt {2}}R < R^2,$$
while the right hand side follows from the observation that the set
$$S = [0, R-1] \cup \{R, 2R, 3R, \cdots, (R-1)R, R^2\}$$
has  $2R$ elements and $(S-S)^+ = [1, R^2]$\\

We now show that $\frac{C}{m^2}$ is not bounded over the class $B$. For a given positive
integer
$R > 2$ choose $S\subset [0,R^2]$ of cardinality $\lambda (R^2)$ and such that
$(S - S)^+ = [1, R^2]$. Let $m$ denote $\lambda (R^2)$, let $R_0< R_1 < \cdots < R_{m-1} = R^2$
 be the set $S$. Let
 $$P(z) = \frac{1}{\sqrt m}( 1 + z^{R_1} + z^{R_2} + \cdots + z^{R_{m-1}})$$

$$\mid P(z)\mid^2 = 1 +\frac{1}{m}\sum_{j=1}^{R^2}d_j(z^{j} + z^{-j})$$

Now $$C(P) \geq A(P) = 2\sum_{j=1}^{R^2}\frac{1}{m}d_jD_j > 2\sum_{j=1}^{R^2}\frac{1}{m}D_j$$
$$= 2\sum_{j=1}^{R^2}\frac{1}{m}(R^2-j) = \frac{1}{m}(R^2-1)R^2$$
$$\geq \frac{1}{2}(R^2-1)R,$$
since $m = \lambda (R^2) \leq 2 R $. Hence  $\frac{C}{m^2}$ is unbounded over the class $B$.\\

We now give an example of a sequence $P_j, j=1,2,\cdots$ from the class $B$ for which
$\frac{C(P_j)}{m_j^2}\rightarrow \infty$ but the sequence $P_j, j=1,2,\dots$ is not flat in
a.e ($dz$) sense.\\
Let
$$P_j(z) = \frac{1}{\sqrt {2j}}\Big( \sum_{i=0}^{j-1}z^i + \sum_{i=1}^jz^{ij}\Big) = \frac{1}{\sqrt {2j}}\frac{1 - z^{j}}{1-z} + \frac{1}{\sqrt{2j}}\frac{1-z^{j^2}}{1-z^j},$$
then clearly, for a given  $z \neq  1$, $P_j(z) \rightarrow 0$ over every subsequence $j_n, n=1,2, \cdots$ over which $z^{j_n}, n= 1,2 \cdots$ stays uniformly  away from 1,
whence $P_j(z), j =1,2,\cdots$ is not a flat sequence in a.e. ($dz$) sense.

   Note that  $\mid S_j\mid = 2j$ and   $\mid P_j(z)\mid^2$ admits
  all the frequencies from 1 to $ j^2$,  whence, as seen above, $\frac{C(P_j)}{j^2} \rightarrow
\infty$ as $j \rightarrow \infty$.\\

 Since  $\frac{1}{2}(\sqrt{2}R +1)\sqrt{2}R = R^2 + \frac{1}{\sqrt{2}}R > R^2$, it may seems
 natural to surmise that $\lambda (R^2) < \sqrt 2 R + K$ for some fixed constant $K$
 independent of $R$. However, as shown to us by A. Ruzsa, this is
 false.\\
 Indeed there is a constant $c > \sqrt 2$ such that $ c R \leq \lambda (R^2)$, as shown below.
 Let $\phi (R) = \frac{\lambda (R^2) - \sqrt{2} R}{R} $. We show that $\phi (R)$ is
 uniformly bounded away from zero over all $R$. If not, $\phi (R)$ will converge to zero over a
 subsequence of natural numbers. Without loss of
  generality we assume that $\phi (R) \rightarrow 0$ as $R \rightarrow \infty$.
For each $R$, let $S_R =S$ be a subset of $[0, R^2]$ of cardinality $\lambda (R^2)$ such that
$(S-S)^+ = [1, R^2]$. Consider
\begin{eqnarray*}
0 &\leq& \Big| \sum_{j \in S(R)}z^j\Big|^2\\
&=&\sum_{j = -R^2}^{R^2}z^j + \sum_{j= -R^2}^{R^2}(d_{j}-1)z^j\\
&<& \sum_{j =-R^2}^{R^2}z^j + 4\phi(R) R^2,
\end{eqnarray*}
since $\mid (S-S)^+\mid < R^2 + 4\phi (R)R^2$ for large $R$. Put $z = e^{iv}$. We get
$$0 \leq \frac{\sin(R^2 + \frac{1}{2})v}{\sin\frac{1}{2}v} + 4\phi (R)R^2$$
which is a contradiction since the right hand side takes negative values for large $R$ and suitable $v$, e.g,
for $v = \frac{3\pi}{2 (R^2 +\frac{1}{2})}$. Whence $\phi (R)$ is bounded away from 0 uniformly in $R$.\\

We give below some probabilistic considerations which need further investigation. Let
$R > 2$ be an integer, and let $S\subset [0, R^2]$ of cardinality $2R$, with $0,R \in S$. Let $\Omega_R$
denote the the collection of all such subsets $S$ in $[0,R^2]$. Cardinality of $\Omega_R$ is  $(^{R^2-1}_{2R-2})$. Equip $\Omega$ with uniform distribution, denoted by $\mathbb{P}_R.$ Let $P(R, S)$ denote the polynomial of class $B$ with frequencies in $S$. For a fixed $\epsilon > 0$, one can consider  $ E(\epsilon, R) = \mathbb{P}_R(\{S: \mid\mid(\mid P(R,S)\mid^2 - 1)\mid\mid_1 > \epsilon\})$. If for every $\epsilon > 0$, $E(\epsilon, R) \rightarrow 0$ as $R \rightarrow \infty$, we will have a probabilistic proof of the existence of a sequence flat polynomials
(in a.e. ($dz$) sense) in the class $B$.\\

For more on flat polynomials, not necessarily with non-negative coefficients, see
\cite{Abd-Nad1}.

{\bf{Acknowledgement.}} M. G. Nadkarni would like to thank University of Rouen for a month long visiting appointment during which the paper was revised and completed.


\begin{thebibliography}{}
%
%
%
\bibitem{Abd-Nad}
E. H. ~el Abdalaoui and M. Nadkarni, \emph{ Calculus of Generalized Riesz Products ,}
Contemporary Mathematics(AMS) 631 (2014), pp. 145-180.

\bibitem{Abd-Nad1}
E. H. ~el Abdalaoui and M. Nadkarni,{ Some notes on flat polynomials,} Arxiv
\bibitem {Bourgain}

J. ~Bourgain,{ On the spectral type of Ornstein class one transformations, }{\it Isr. J. Math. },{\bf 84
}(1993), 53-63.
\bibitem{erd}
P. Erd\"{o}s and P. Turan, {\em On a Problem of Sidon in Additive Number Theory and
and some related problems} J. London. Math. Soc. {\bf 16}(1941) 212-215.


\bibitem{Guenais}
M. ~Guenais, {\em Morse cocycles and simple Lebesgue spectrum, } Ergodic Theory Dynam. Systems,
 \textbf{19} (1999), no. 2, 437-446.

\bibitem{Host-Mela-Parreau}
 B. Host, J.-F. M\'ela, F. Parreau, {\em Non-singular transformations and spectral analysis of measures}, Bull. Soc. math. France {\textbf{119}} (1991), 33-90.

\bibitem{lind}
B. Lindstro\"{o}m, {\em An Inequality for $B_2$ Sequences,} J. Comb. Th., {\bf 6} 211-212 (1969).
\bibitem{Peyriere}
J. Peyri\`ere, {\it \'Etude de quelques propri\'et\'es des produits de Riesz}, Ann.\ Inst.\ Fourier,
Grenoble \textbf{25}, 2 (1975), 127--169.
\end{thebibliography}
\end{document}